\newtheorem{theorem}{Theorem} [section]
\newtheorem{conjecture}[theorem]{Conjecture}
\newtheorem{lemma}[theorem]{Lemma}
\newcommand{\dist}{{\text{\upshape dist}}}
\newcommand{\rad}{\text{\upshape rad}}
\begin{document}

\title{Maximum matchings in regular graphs}
  \author{
   Dong Ye \thanks{
Department of Mathematical Sciences and Center for Computational 
Sciences,
Middle Tennessee State University,
  Murfreesboro, TN 37132;
Email: dong.ye@mtsu.edu. Partially supported by a grant from Simons Foundation (no. 369519).}}

\date{}

\maketitle
\begin{abstract}
It was conjectured by Mkrtchyan, Petrosyan, and Vardanyan that
every graph $G$ with
$\Delta(G)-\delta(G) \le 1$ has a maximum matching $M$ such that any two 
$M$-unsaturated vertices do not share a
neighbor. In this note, we confirm the conjecture for all $k$-regular simple graphs and also $k$-regular 
multigraphs with $k\le 4$.
%Early results obtained in \cite{MPV10,P,CP} leave the conjecture unknown only 
%for $k$-regular graphs with $4\le k\le 6$. All counterexamples for $k$-regular 
%graphs $(k\ge7)$ given in $\cite{P}$ have multiple edges. In this paper, we consider
%the conjecture for both simple $k$-regular graphs and $k$-regular graphs with 
%multiple edges.
%We show that  the conjecture holds for all $k$-regular simple graphs, and for 
%$k$-regular graphs ($k=4$ or 5) with multiple edges. 
\end{abstract}

\section{Introduction}

Graphs considered in this paper may have multi-edges, but no loops. A graph without multi-edges is called a {\em simple} graph. A 
 {\em matching} $M$of  a graph $G$ is a set of independent edges. A vertex
 is {\em $M$-saturated} 
if it is incident with an edge of $M$, and {\em $M$-unsaturated} otherwise. 
A matching $M$ is said to be {\em maximum} if for any other matching $M'$, 
$|M|\ge |M'|$.
A matching $M$ is {\em perfect} if it covers 
all vertices of $G$.  If $G$ has a perfect matching, the every maximum matching 
is a perfect matching. 
The maximum and minimum degrees of a graph $G$ 
are denoted by $\Delta(G)$ and $\delta(G)$, respectively. 
Mkrtchyan, Petrosyan and Vardanyan \cite{MPV10, MPV13} made the following conjecture.

\begin{conjecture} [Mkrtchyan et. al. \cite{MPV10, MPV13}]\label{conj}
Let $G$ be a graph with $\Delta(G)-\delta(G)\le 1$. Then $G$ contains a maximum matching 
$M$ such that any two $M$-unsaturated vertices do not share a neighbor. 
\end{conjecture}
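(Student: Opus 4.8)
The plan is to analyze \emph{every} maximum matching of $G$ through the Gallai--Edmonds decomposition and to reduce the conjecture to a purely combinatorial selection problem. Write $D=D(G)$ for the set of vertices missed by at least one maximum matching, $A=A(G)=N_G(D)\setminus D$, and $C=V(G)\setminus(D\cup A)$. I will use that every component of $G[D]$ is factor-critical, that $G[C]$ has a perfect matching, and that every maximum matching $M$ covers $C$, matches the vertices of $A$ into pairwise distinct components of $G[D]$, and leaves exactly one exposed vertex in each of the remaining (call them \emph{free}) components of $G[D]$. Because each free component is factor-critical, I am free to designate \emph{any} single vertex of it as the exposed one. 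Thus the exposed set $U$ of a maximum matching is precisely a transversal of the free components, with complete freedom of choice inside each one. Note that the degree hypothesis $\Delta(G)-\delta(G)\le 1$ is carried along unchanged; I do not assume regularity or simplicity at this stage.

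First I would show that a common neighbor of two exposed vertices must lie in $A$. Since $A=N_G(D)\setminus D$, there are no edges between $D$ and $C$, so no exposed vertex has a neighbor in $C$. If $u,u'$ are exposed vertices in distinct free components $K,K'$ and $x$ is a common neighbor, then $x\in D\cup A$; but $x\in D$ would force $x$ to lie in the component of each of $u,u'$, i.e.\ in $K\cap K'=\emptyset$. Hence $x\in A$. Consequently the conjecture is equivalent to the statement that one can choose the exposed vertices so that no vertex of $A$ is adjacent to two of them.

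This reduces the conjecture to the following selection problem, which I regard as the heart of the matter: given the free components $K_1,\dots,K_m$ and the set $A$, choose a representative $u_i\in V(K_i)$ for each $i$ so that every $a\in A$ has at most one chosen representative in its neighborhood (and simultaneously so that a valid $A$-to-component assignment still exists for the non-free components). The plan is to attack this by an extremal exchange argument: among all maximum matchings take one minimizing the number of vertices of $A$ that are adjacent to two exposed vertices, and, assuming this count is positive, use factor-criticality to re-designate the exposed vertex of an offending component so as to dodge the conflicting $a$, thereby lowering the count. The hypothesis $\Delta(G)-\delta(G)\le 1$ enters here to bound, for each $a\in A$, the number $|N_G(a)\cap V(K_i)|$ of candidate vertices it blocks inside a component, which is what should guarantee enough room inside a factor-critical component to absorb a re-designation.

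The hard part will be exactly this re-designation step: moving the exposed vertex within one component to avoid a conflicting $a$ can create fresh conflicts at other vertices of $A$ adjacent to the new choice, so a naive local swap need not terminate. Controlling this cascade is where the near-regularity must be used quantitatively --- intuitively, when every degree is at most $\delta+1$ no single $a$ can dominate a large factor-critical component, so a safe vertex to expose should always remain. I expect this to be the genuinely delicate point, and the place where bounding the number of \emph{distinct} neighbors (rather than merely the degree) does real work; it is also precisely the configuration that the known multiedge examples of large degree must exploit to obstruct the selection, so any proof reaching beyond the small-degree regime has to confront this step head-on.
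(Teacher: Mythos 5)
There is a fatal problem before any of the technical details: the statement you are trying to prove is false at the level of generality at which you attack it, and the paper itself records the counterexamples. Picouleau constructed a \emph{simple bipartite} graph with $\delta(G)=4$ and $\Delta(G)=5$ (so $\Delta-\delta\le 1$) admitting no maximum matching of the desired kind, and Petrosyan did the same for $k$-regular multigraphs for every $k\ge 7$ and for $(k,k+1)$-graphs with $k=3$ or $k\ge 5$. Consequently the paper does not (and cannot) prove the conjecture; it proves it only for simple $k$-regular graphs and for $k$-regular multigraphs with $k\in\{4,5\}$, and explicitly leaves $k=6$ with multiple edges open. Your proposal keeps the hypothesis $\Delta(G)-\delta(G)\le 1$ ``unchanged'' and never invokes regularity or simplicity, so the extremal re-designation argument you outline would, if it worked, prove a refuted statement. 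The step you yourself flag as delicate --- controlling the cascade of fresh conflicts when the exposed vertex of a component is moved --- is therefore not merely delicate but impossible to close in general: Picouleau's example is exactly a configuration where every transversal of the free components puts two exposed vertices into the neighborhood of some $a\in A$, and your only quantitative input (bounding $|N(a)\cap V(K_i)|$ via near-regularity) cannot rule this out.

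Your opening reduction is sound and does coincide with the paper's setup: by Gallai--Edmonds, exposed vertices form a transversal of the components of $G[D]$ left unmatched by the $A$-assignment, any vertex of a factor-critical component may be designated exposed, and a common neighbor of two exposed vertices must lie in $A$. But where you propose a local exchange heuristic, the paper's actual proofs live entirely in the quantitative structure that regularity provides and your sketch never touches: the count $c(D)\ge |A|+2$; the parity relation $|[Q_i,A]|\equiv k\pmod 2$, which for $k=4$ forces $|[Q_i,A]|=2$ (hence a \emph{good} vertex, with all neighbors inside its own component) whenever $|[Q_i,A]|<k$, and for simple graphs forces $|Q_i|>k$ by an edge count; a Hall-type lemma (Lemma 2.3) producing a maximum matching of the contracted bipartite graph $G(A,B)$ covering all high-degree contracted vertices $W$, valid because $\delta(W)\ge\Delta(A)$; and for $k=5$ the $\{P_2,P_3\}$-packing lemmas (Lemmas 2.4 and 2.5), which need the precise inequalities $\Delta(U)<\Delta(A)\le\min\{\delta(W),2\delta(U)\}$ to cover the components with $|[Q_i,A]|=3$ that have no good vertex. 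It is exactly these degree comparisons that break when $\Delta-\delta=1$ or when multiplicities let a component with few distinct neighbors absorb many edges to $A$ --- which is why the theorems are restricted as they are. To repair your write-up you must restrict the claim to the cases the paper establishes and replace the exchange heuristic with an argument that exploits regularity quantitatively.
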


This conjecture is verified for subcubic graphs (i.e. $\Delta(G)= 3$) by Mkrtchyan, Petrosyan and Vardanyan 
\cite{MPV10}. Later,
Picouleau \cite{CP} find a counterexample to the conjecture, which is a bipartite simple graph
with
$\delta(G)=4$ and $\Delta(G)=5$. Petrosyan~\cite{P} constructs counterexamples to the conjecture  for all $k$-regular graphs with $k\ge 7$ and
for graphs $G$ with $\Delta(G)-\delta (G)=1$ and 
$\Delta(G)\ge 4$.
%So the conjecture remains unkown only for $k$-regular graphs for $4\le k\le 6$.
%There is no affirmative result known for the conjecture except for subcubic graphs
%\cite{MPV10}.
Note that, most of counterexamples of Conjecture~\ref{conj} for graphs $G$ with $\Delta(G)-\delta(G)=1$ are simple, but
all $k$-regular graphs with $k\ge 7$ given by Petrosyan \cite{P} have multi-edges.
% by Picouleau \cite{CP} is simple. But all counterexamples 
%for $k$-regular graphs for $k\ge 7$ obtained by Petrosyan \cite{P} 
%have multiple-edges. 
As affirmative answer to Conjecture~\ref{conj} is known only for graphs with $\Delta(G)\le 3$,
Mkrtchyan et. al \cite{MPV10} asked whether the conjecture holds for any $k$-regular graphs with $k\ge 4$.

In this note, we consider the 
conjecture for both $k$-regular simple graphs and $k$-regular graphs with multi-edges. 
First we show that
Conjecture~\ref{conj} does hold for all $k$-regular simple graphs. 

\begin{theorem}\label{thm:simple}
Let $G$ be a $k$-regular simple graph. Then $G$ has a maximum matching $M$
such that any two $M$-unsaturated vertices do not share a neighbor.
\end{theorem}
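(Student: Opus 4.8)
The plan is to apply the Gallai--Edmonds Structure Theorem to reduce the statement to a purely combinatorial selection problem, and then to resolve that problem by an extremal choice of maximum matching together with alternating-path exchanges, using $k$-regularity and simplicity to guarantee that the exchanges terminate without conflicts.

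First I would fix the Gallai--Edmonds decomposition $V(G)=D\cup A\cup C$, where $D=D(G)$ is the set of vertices missed by some maximum matching, $A=A(G)$ is the set of their neighbours outside $D$, and $C=C(G)$ is the rest. Recall that there are no edges between $D$ and $C$, that every component of $G[D]$ is factor-critical (hence of odd order), and that every maximum matching $M$ saturates all of $A$ by matching each vertex of $A$ into a distinct component of $G[D]$, matches $C$ perfectly, and leaves exactly one vertex exposed in each of the $c(D)-|A|$ remaining components. Two exposed vertices $u_1,u_2$ therefore lie in distinct components of $G[D]$; any common neighbour cannot lie in $D$ (it would merge their components) nor in $C$ (no $D$--$C$ edges), so it must lie in $A$. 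Moreover two exposed vertices are never adjacent (else $M$ could be enlarged). Hence the theorem is equivalent to: choose a maximum matching so that no vertex of $A$ is adjacent to two exposed vertices. In making this choice I have two kinds of freedom: which $c(D)-|A|$ components are left unmatched (any choice for which $A$ still admits a matching into the remaining components), and, since each such component is factor-critical, which one of its vertices is the exposed one.

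The engine I would use is an extremal argument. Among all maximum matchings (and all admissible relocations of the exposed vertices inside their factor-critical components) I would take one minimising the number of \emph{bad pairs}, i.e.\ pairs of exposed vertices sharing a neighbour in $A$, and assume for contradiction that this minimum is positive. Given a bad pair $u_1,u_2$ with common neighbour $a\in A$, the vertex $a$ is matched to some vertex $z$ lying in a matched component $D_3\notin\{D_1,D_2\}$. Flipping the alternating path $u_1\,a\,z$ (making $u_1a$ a matching edge and $az$ a non-matching edge) produces a new maximum matching in which the hole has moved from $u_1$ into $D_3$; since $D_3$ is now unmatched and factor-critical I may then place its hole at any vertex I like. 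The aim is to relocate this hole, or else to relocate $u_1$ or $u_2$ within their own components away from $a$, so as to strictly decrease the number of bad pairs, contradicting minimality.

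The hard part is guaranteeing that such a conflict-reducing relocation always exists, and this is exactly where $k$-regularity and simplicity enter. The obstruction is a component too small to move its hole off $a$ --- in the extreme a \emph{singleton} unmatched component, whose unique vertex sends all $k$ of its edges into $A$. In a simple graph these $k$ edges reach $k$ \emph{distinct} vertices of $A$, and in general a single vertex of $A$ meets any component in distinct vertices. The governing identity $\sum_i e_i=k|D|-2m_D$, where $e_i$ counts the edges from the $i$-th component of $G[D]$ to $A$ and $m_D$ the edges inside $D$, together with $\sum_i e_i\le k|A|$, constrains how small components and their $A$-neighbourhoods can crowd together, and in the simple case I expect it to leave enough room in $A$ to separate the exposed vertices. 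I would package this as a Hall-type (or independent-transversal) condition on the bipartite graph between the candidate exposed vertices and $A$ and verify it using regularity and simplicity; this verification is the main technical obstacle. It is precisely the step that fails for multigraphs, where parallel edges let one vertex of $A$ absorb many edges of a small component and destroy this room --- the mechanism behind Petrosyan's counterexamples for $k\ge 7$.
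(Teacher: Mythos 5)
Your Gallai--Edmonds setup is exactly the paper's starting point, and your reductions are correct: exposed vertices lie in distinct factor-critical components of $G[D]$, and any common neighbour must lie in $A$. But the proposal has a genuine gap precisely where the theorem lives: you explicitly defer the step that uses simplicity and regularity, saying you ``expect'' the counting to leave enough room and that this verification is ``the main technical obstacle.'' That step is the whole proof, and nothing in your exchange scheme guarantees it closes. Concretely, after you flip $u_1\,a\,z$ and move the hole into $D_3$, every placement of the new hole inside $D_3$ may create a new bad pair; minimality of the bad-pair count does not by itself yield a contradiction, and you would need a global argument showing that some admissible system of hole placements avoids all conflicts --- which is exactly what is not supplied. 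Moreover, the identity you propose to use, $\sum_i e_i=k|D|-2m_D\le k|A|$, is not the inequality that does the work.

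The paper closes this gap with two concrete ingredients absent from your proposal. First, a per-component use of simplicity: if a component $Q_i$ of $G[D]$ has $|[Q_i,A]|<k$, then bounding its internal edges gives
\[
\frac{k|Q_i|-|[Q_i,A]|}{2}\le \binom{|Q_i|}{2},
\]
which forces $|Q_i|>k$; since fewer than $k$ of its vertices can have a neighbour in $A$, such a component contains a \emph{good} vertex, i.e.\ one whose neighbours all lie inside $Q_i$. Second, a Hall-type lemma (Lemma~\ref{lemma:case4}, proved by an alternating-path exchange): in the bipartite graph $H$ obtained by contracting the components, the set $W$ of contracted vertices of degree at least $k$ satisfies $\delta(W)\ge k\ge \Delta(A)$, so $H$ has a maximum matching covering all of $A$ and all of $W$. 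Combining the two, every component with at least $k$ edges to $A$ (in particular every singleton, your worst case) is matched and fully saturated, and every unmatched component places its hole at a good vertex. The resulting exposed vertices have \emph{no} neighbours in $A$ at all --- a conclusion stronger than your target of pairwise non-sharing, and one that makes the conflict analysis you were attempting unnecessary. Indeed, your weaker target is harder to run an extremal argument on, because bad pairs interact; aiming at the stronger good-vertex property is what makes the proof go through.
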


Further, we show that Conjecture~\ref{conj} holds for $k$-regular graphs with multi-edges for $k\le 4$.

\begin{theorem}\label{thm:4-regular}
Let $G$ be a $k$-regular graph with $k\le 4$. Then $G$ has a maximum matching $M$
such that any two $M$-unsaturated vertices do not share a neighbor. 
\end{theorem}

Our results together with examples given by Petrosyan~\cite{P} leave Conjecture~\ref{conj} unkown for 5 and 6-regular 
graphs with multi-edges.

\section{Preliminaries}
Let $G$ be a graph and $v$ be a vertex of $G$. The {\em neighborhood} of $v$ is 
set of all vertices adjacent to $v$, denoted by 
$N(v)$. The degree of $v$ is $d_G(v)=|N(v)|$. 
If there is no confusion, we use $d(v)$ instead. For $X\subseteq V(G)$, 
let $\delta(X):=\min\{d(v)|v\in X\}$ and $\Delta(X):=\max\{d(v)| v\in X\}$. The 
neighborhood of $X$ is defined as $N(X):=\{y|y\mbox{ is a neighbor of a vertex } x\in X\}$.
For two subsets $X_1$ and $X_2$ of $V(G)$, use $[X_1, X_2]$ to denote the all edges with one 
endvertex in $X_1$ and another endvertex in $X_2$.
%Let $M$ be a matching of $G$. A vertex subset $X$ is {\em $M$-saturated} if
%$M$ covers all vertices of $X$. 
For two subgraphs $G_1$ and $G_2$ of $G$,
the symmetric difference of $G_1\oplus G_2$ is defined as a subgraph 
with vertex set $V(G_1)\cup V(G_2)$ and edge set $(E(G_1)\cup E(G_2))\backslash 
(E(G_1)\cap E(G_2))$. 
%Use $[G_1, G_2]$ to denote the set of all edges that join a vertex of $G_1$ and a vertex of 
%$G_2$. 

A matching of a graph $G$ is a {\em near-perfect matching} if it covers all 
vertices except one. If a graph $G$ has a near perfect matching, then $G$ has odd
number of vertices. A graph 
is {\em factor-critical} if, for any vertex $v$, the subgraph $G\backslash\{v\}$ has a perfect matching.
Every maximum matching of a factor-critical graph is a near-perfect matching. 

Let $D$ be the set of all vertices of a graph
$G$ which are not covered by 
at least one maximum matching, and $A$, the set of all vertices in $V(G)-D $ adjacent
to at least one vertex in $D $. Denote $C =V(G)-A -D$. The graph induced by 
all vertices in
$D$ (resp. $A$ and $C$) is denoted by $G[D]$ (resp. $G[A]$ and $G[C]$). The 
following theorem characterizes the structures of maximum matchings of 
graphs, which is due to Gallai \cite{G64} and Edmonds \cite{E65}.

\begin{theorem}[Gallai-Edmonds Structure Theorem, Theorem 3.2.1 in \cite{LP}]% on page 94]
Let $G$ be a graph, and $A$, $D$ and $C$ are defined as above. Then:\\
(1) the components of the subgraph induced by $D$ are factor-critical;\\
(2) the subgraph induced by $C$ has a perfect matching;\\
(3) if $M$ is a maximum matching of $G$, it contains a near-perfect matching
of each component of $G[D]$, a perfect matching of $G[C]$ and matches all vertices
of $A$ with vertices in distinct components of $G[D]$.
\end{theorem}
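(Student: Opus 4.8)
The plan is to deduce the theorem from two classical ingredients---the Berge--Tutte deficiency formula and Gallai's Lemma---using the symmetric-difference operation $\oplus$ already introduced above as the bridge between them. Throughout, write $\nu(G)$ for the size of a maximum matching, and call a vertex \emph{inessential} if it lies in $D$ (some maximum matching misses it) and \emph{essential} otherwise; note $v\in D$ exactly when $\nu(G-v)=\nu(G)$. The first step is to establish a uniform, matching-independent description of $D$. For any two maximum matchings $M_1,M_2$, the subgraph $M_1\oplus M_2$ is a disjoint union of even cycles and paths, and since $|M_1|=|M_2|$ each path alternates with equally many $M_1$- and $M_2$-edges, hence has even length. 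From this I would derive that, for a \emph{fixed} maximum matching $M$, a vertex $v$ lies in $D$ if and only if some $M$-alternating path of even length joins an $M$-unsaturated vertex to $v$; the force of this description is that it does not depend on which $M$ we pick.

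The engine of the argument is \textbf{Gallai's Lemma}: if $G$ is connected and $D=V(G)$, then $G$ is factor-critical. I would prove it directly from the alternating-path description. Since $D=V(G)$, fixing a maximum matching $M$ makes every vertex reachable by an even $M$-alternating path from some $M$-unsaturated vertex, i.e. every vertex is ``outer''. If $M$ left two vertices unsaturated, then the standard alternating-forest (blossom) argument, together with the connectivity of $G$ and the absence of any purely ``inner'' vertex, would force an edge between two distinct trees; concatenating their alternating paths across it yields an augmenting path, contradicting the maximality of $M$. Hence $M$ leaves exactly one vertex unsaturated, so $|V(G)|-2\nu(G)=1$ and $G$ is factor-critical. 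This lemma will give part~(1) as soon as we know that each component $K$ of $G[D]$ satisfies $D(K)=V(K)$ (its own $D$-set being all of $V(K)$): then $K$ is connected with every vertex inessential in $K$, and the lemma applies to $K$.

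I expect the transfer of inessentiality from $G$ down to a component of $G[D]$ to be the main obstacle, and it is best handled together with part~(3). The plan is to prove a single combined statement: \emph{every} maximum matching $M$ restricts to a near-perfect matching on each component of $G[D]$, to a perfect matching on $G[C]$, and matches each vertex of $A$ to a vertex in a distinct component of $G[D]$. The heart is to rule out an $M$-edge joining two components of $G[D]$, or an $M$-edge leaving $C$: assuming such an edge, the alternating-path characterisation of the first step lets one reroute $M$ into a maximum matching exposing two vertices inside a single component of $G[D]$, which is impossible once that component is factor-critical. Granting the combined statement, a vertex $x$ of a component $K$ is inessential in $K$ because a maximum matching of $G$ missing $x$ restricts to a perfect matching of $K-x$; thus $D(K)=V(K)$, Gallai's Lemma applies, and part~(1) follows, while the restriction to $G[C]$ is exactly part~(2).

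Finally, Berge--Tutte supplies the clean accounting that ties everything together. Writing $o(H)$ for the number of odd components of $H$ and $c$ for the number of components of $G[D]$, part~(1) makes every component of $G[D]$ odd and part~(2) makes every component of $G[C]$ even, so $o(G-A)=c$. The combined statement then says a maximum matching exposes exactly $c-|A|$ vertices, all inside $G[D]$, matching the bound $o(G-A)-|A|$; this confirms $|V(G)|-2\nu(G)=c-|A|$ and that $A$ is a deficiency-maximising barrier, which is the quantitative content of part~(3). The only genuinely delicate point in the whole plan is the rerouting argument of the previous paragraph, where one must produce a \emph{single} alternating path exposing a second vertex in the \emph{same} component of $G[D]$; everything else is bookkeeping with the deficiency formula.
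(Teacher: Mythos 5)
First, a caveat about the comparison itself: the paper does not prove this statement at all --- it is quoted from Lov\'asz and Plummer (Theorem 3.2.1 of \cite{LP}) as a known classical result, with attributions to Gallai and Edmonds. So there is no internal proof to measure your attempt against; I can only assess your plan on its own merits.

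On those merits there is a genuine gap: as written, your plan is circular. The proof you propose for the combined statement (part (3)) rules out a bad $M$-edge by rerouting $M$ into a maximum matching exposing two vertices inside one component of $G[D]$, ``which is impossible once that component is factor-critical'' --- that is, it invokes part (1). But your proof of part (1) (transfer of inessentiality from $G$ down to a component $K$, then Gallai's Lemma) is explicitly conditioned on ``granting the combined statement.'' Each of (1) and (3) is derived from the other, and nothing in the plan breaks the cycle. Worse, even if factor-criticality of $K$ were granted for free, the impossibility you lean on is not immediate: suppose a maximum matching $M$ exposes $x,y\in K$ and sends $|F|$ edges from $K$ into $A$ (with $|F|$ odd, by parity of $|V(K)|$). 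Replacing the part of $M$ meeting $K$ by a perfect matching of $K-x$ changes the size by $(1-|F|)/2$, so one gets a contradiction only when $|F|=1$ (the swap then exposes a vertex of $A$, contradicting $A\cap D=\emptyset$); for $|F|\ge 3$ the swap loses edges and proves nothing. The true reason at most one exposed vertex lies in each component is that $A$ is an optimal Tutte--Berge barrier --- precisely the fact your last paragraph relegates to ``bookkeeping'' that merely ``confirms'' the theorem, when in fact it is load-bearing. To repair the plan you must prove one of the two halves unconditionally first: either (a) show directly, by a Gallai-style minimal-counterexample argument (take a maximum matching missing two vertices of the same component of $G[D]$ at minimum distance within that component, and apply your alternating-path characterisation of $D$ at an intermediate vertex of a shortest path), that no maximum matching misses two vertices of one component of $G[D]$ --- this uses no factor-criticality, and afterwards your transfer step and Gallai's Lemma do give (1); or (b) follow the Lov\'asz--Plummer route via the Stability Lemma: for $a\in A(G)$ one has $D(G-a)=D(G)$, $A(G-a)=A(G)\setminus\{a\}$, $C(G-a)=C(G)$, so deleting the vertices of $A$ one at a time reduces everything to the case $A=\emptyset$, where the components of $G[D]$ are components of $G$ and Gallai's Lemma applies directly; Hall's Theorem (as stated above) then supplies the matching of $A$ into distinct components.
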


Contract every component of $G[D]$ to a vertex and let $B$ be the set of all these 
vertices. Then the graph obtained from $G\backslash C$ by contracting all components of 
$G[D]$ to a vertex and deleting all generated loops is a bipartite graph, denoted by $G(A,B)$.
Because every component of $G[D]$ is factor-critical, a maximum matching of $G(A,B)$ 
is corresponding to a maximum matching 
of $G$, and vice versa. Before processing to prove our main results, we need some 
results for maximum matchings of bipartite graphs $G(A,B)$. 

\begin{theorem}[Hall's Theorem, Theorem 1.13 in \cite{LP}]% on page 5]
\label{thm:Hall}
Let $G(A, B)$ be a bipartite graph. If $|N(S)|\ge |S|$ for any $S\subseteq A$, then 
$G$ has a matching $M$ covering all vertices of $A$. 
\end{theorem}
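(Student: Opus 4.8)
The plan is to prove this classical marriage theorem by induction on $|A|$, the number of vertices on the left side. The base case $|A|=1$ is immediate: the lone vertex $a$ satisfies $|N(\{a\})|\ge 1$, so it has a neighbor and can be matched. For the inductive step I would assume the result for every bipartite graph whose left part has fewer than $|A|$ vertices, and split into two cases according to how tightly Hall's condition binds.

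First I would dispose of the \emph{slack} case, in which every nonempty proper subset $S\subsetneq A$ satisfies the strict inequality $|N(S)|\ge |S|+1$. Here I would pick an arbitrary edge $ab$ with $a\in A$ and $b\in B$ (one exists since $|N(A)|\ge |A|\ge 1$), delete both endpoints, and check that in the smaller graph Hall's condition survives: for any $S\subseteq A\setminus\{a\}$, deleting $b$ drops the neighborhood by at most one, so the surviving neighborhood has size at least $|N(S)|-1\ge |S|$. Induction then yields a matching of $A\setminus\{a\}$, to which I append the edge $ab$.

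The more delicate case, and the one I expect to be the main obstacle, is when some nonempty proper subset $S_0\subsetneq A$ is \emph{tight}, i.e. $|N(S_0)|=|S_0|$. I would split the graph along $S_0$. On the pair $(S_0, N(S_0))$ Hall's condition is inherited directly, since for $T\subseteq S_0$ all neighbors of $T$ already lie in $N(S_0)$, so $|N(T)|\ge |T|$; induction then gives a matching $M_1$ saturating $S_0$ and using up exactly the $|S_0|$ vertices of $N(S_0)$. The subtle point is to verify Hall's condition on the complementary pair $(A\setminus S_0,\ B\setminus N(S_0))$. For any $T\subseteq A\setminus S_0$ I would apply the hypothesis to $T\cup S_0$ and use $N(T\cup S_0)=N(T)\cup N(S_0)$ to compute $|N(T)\setminus N(S_0)| = |N(T\cup S_0)|-|N(S_0)| \ge (|T|+|S_0|)-|S_0| = |T|$, which is precisely the inequality needed for the reduced graph. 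Induction then supplies a matching $M_2$ saturating $A\setminus S_0$ inside $B\setminus N(S_0)$, and since $M_1$ and $M_2$ use disjoint vertex sets, $M_1\cup M_2$ saturates all of $A$. The only genuine care required is the bookkeeping of neighborhoods across the split so that the two sub-matchings remain compatible, and the counting above is exactly what makes them fit together.
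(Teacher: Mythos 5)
Your proof is correct, but there is nothing in the paper to compare it against: the paper does not prove this statement at all. Hall's Theorem is imported as a known classical result, cited as Theorem 1.13 of Lov\'asz and Plummer's \emph{Matching Theory}, and is then used as a black box (for instance inside the proof of Lemma~\ref{lemma:case4}). What you have written is the standard Halmos--Vaughan induction on $|A|$: the slack case (every nonempty proper $S\subsetneq A$ has $|N(S)|\ge|S|+1$, so one edge can be deleted safely) and the tight case (split along a critical set $S_0$ with $|N(S_0)|=|S_0|$, verify Hall's condition on both $(S_0,N(S_0))$ and $(A\setminus S_0,\,B\setminus N(S_0))$, and glue the two matchings). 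All the key verifications are present and accurate, in particular the computation $|N(T)\setminus N(S_0)|=|N(T\cup S_0)|-|N(S_0)|\ge|T|$ for $T\subseteq A\setminus S_0$, which is the one step where such proofs usually go wrong; note also that your argument is insensitive to multiple edges, which matters since the paper's graphs may have them. So your proposal supplies a self-contained proof of a result the paper simply quotes; the paper's ``approach'' buys brevity by delegating to the literature, while yours makes the exposition self-contained at the cost of a page of classical material.
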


The following techincal lemma is needed in proof of our main results.

\begin{lemma}\label{lemma:case4}
Let $G(A,B)$ be a bipartite graph such that every maximum matching of $G(A,B)$ covers
all vertices of $A$. 
Let $W\subseteq B$ such that $\delta(W)\ge \Delta(A)$.
Then $G(A,B)$ has a maximum matching $M$ covering all vertices of $W$.
\end{lemma}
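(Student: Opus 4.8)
The plan is to argue by an extremal choice of matching together with an alternating-tree analysis, using the degree hypothesis only at the very end through a short edge-counting estimate. First I note that since every maximum matching of $G(A,B)$ saturates $A$, any matching saturating $A$ has exactly $|A|$ edges and is therefore maximum; in particular such matchings exist. Among all maximum matchings of $G(A,B)$ I would choose one, say $M$, that saturates as many vertices of $W$ as possible, and aim to show that $M$ already saturates all of $W$.

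Suppose not, and fix an $M$-unsaturated vertex $w\in W$. I would then examine the set $R$ of all vertices reachable from $w$ by $M$-alternating paths, splitting it as $R_A=R\cap A$ and $R_B=R\cap B$ with $w\in R_B$. Because $M$ is maximum there is no $M$-augmenting path, so the only $M$-unsaturated vertex in $R$ is $w$ itself; the standard alternating-tree structure then gives $N(R_B)=R_A$, and $M$ restricts to a bijection between $R_A$ and $R_B\setminus\{w\}$, whence $|R_A|=|R_B|-1$.

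The crux is to show $R_B\not\subseteq W$, and this is where the hypothesis $\delta(W)\ge\Delta(A)$ enters. Every edge incident with a vertex of $R_B$ has its other end in $R_A=N(R_B)$, so double-counting the edges joining $R_A$ and $R_B$ yields $\sum_{b\in R_B}d(b)\le\sum_{a\in R_A}d(a)$. If we had $R_B\subseteq W$, the left-hand side would be at least $|R_B|\,\Delta(A)$ while the right-hand side is at most $|R_A|\,\Delta(A)=(|R_B|-1)\,\Delta(A)$, a contradiction (the degenerate case $\Delta(A)=0$ forces $A=\emptyset$ and is trivial). Hence some $b\in R_B\setminus W$ is reachable from $w$.

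Finally, taking an $M$-alternating $w$--$b$ path $P$ and forming $M'=M\oplus P$ produces a matching that still saturates every vertex of $A$ (each vertex of $A$ merely changes partner), newly saturates $w\in W$, and un-saturates only $b\notin W$. Thus $M'$ is again maximum yet saturates strictly more vertices of $W$ than $M$, contradicting the choice of $M$; therefore $M$ saturates all of $W$, as required. I expect the only delicate point to be the alternating-tree bookkeeping that yields $N(R_B)=R_A$ and $|R_A|=|R_B|-1$, since everything else is routine. As a remark, the same edge count shows directly that Hall's condition holds for $W$, so $W$ itself can be saturated; one could alternatively combine a matching saturating $A$ with one saturating $W$ to reach the conclusion.
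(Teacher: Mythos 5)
Your argument is correct, and while its skeleton (extremal choice of a maximum matching, then an alternating-path exchange to cover one more vertex of $W$) matches the paper's, the key middle step goes by a genuinely different route. The paper verifies Hall's condition for $W$ using the same degree comparison you use, invokes Hall's theorem (Theorem~\ref{thm:Hall}) to obtain an auxiliary matching $M'$ saturating all of $W$, and then extracts the exchange path as the component of $M\oplus M'$ containing the uncovered vertex, noting that its other end must lie in $B\setminus W$ because $M$ covers $A$ and $M'$ covers $W$. You never introduce such an $M'$: instead you grow the alternating reachability set $R$ from the uncovered vertex $w$, establish $N(R_B)=R_A$ and $|R_A|=|R_B|-1$, and let the cut-counting inequality $\sum_{b\in R_B}d(b)\le\sum_{a\in R_A}d(a)$ force the existence of a reachable vertex $b\in R_B\setminus W$, which then drives the same exchange. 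In effect you inline a proof of the relevant instance of Hall's theorem rather than quoting it; this costs some bookkeeping but is self-contained, and it is exactly the device the paper itself deploys later in the proofs of Lemmas~\ref{lemma:packing} and~\ref{lemma:case5} (orient the edges, take the reachable subgraph, show the cut is one-directional, and count degrees across it), so your route makes the three lemmas methodologically uniform where the paper's does not. One caveat: dismissing $\Delta(A)=0$ as trivial is not quite accurate, since if $A=\emptyset$ and $W\ne\emptyset$ then the graph has no edges and the lemma's conclusion actually fails for an isolated vertex of $W$; the statement implicitly assumes $\delta(W)\ge 1$. But the paper's Hall-condition computation silently needs the same assumption, and every application of the lemma has $\Delta(A)\ge 1$, so this is a shared, harmless edge case rather than a gap in your proof.
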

\begin{proof}
Let $M$ be a maximum matching of $G(A,B)$ such that the number of vertices of $W$ 
covered by $M$ is maximum.
If $M$ covers all vertices of $W$, the lemma follows.
So assume that there exists an $M$-unsaturated vertex  $x\in W$. 

For any $U\subseteq W$, we have $\delta(U)\ge  \delta(W)$ and $N(U)\subset A$. Further,
\[\delta(W)|U|\le \delta(U) |U|\le  |[U, N(U)]| \le \sum_{v\in N(U)} d(v) 
\le \Delta(A)|N(U)|.\] It follows
that $|N(U)|\ge |U|$ because $\delta(W)\ge \Delta(A)$. 
By applying Hall's 
Theorem on the subgraph induced by $W$ and $N(W)$, it follows that
$G$ has a matching $M'$ covering all vertices of $W$.  
 
Let $M\oplus M'$ be the symmetric difference of $M$ and $M'$. Every
component of $M\oplus M'$ is either a path or a cycle. Since $x$ is not covered by $M$ but is covered by $M'$, 
it follows that $x$
is an end-vertex of some path-component $P$ of $M\oplus M'$. Let $y$
be another end-vertex of $P$. Note that every vertex of $A$ is covered by an edge
of $M$ and every vertex of $W$ is covered by an edge of $M'$. 
So $y\in B\backslash W$.

Then let $M''=M\oplus P$. Then $M''$ is a maximum matching of $G$ which
covers $x$ and all vertices covered by $M$ except $y$. Note that $y\in B\backslash W$
and $x\in W$. Hence $M''$ covers more vertices of $W$ than $M$, a contradiction to 
the maximality of the number of vertices of $W$ covered by $M$. 
This completes the proof.
\end{proof}

\section{Proof of main results}
 
%Now, we are going to prove our main results.\medskip

Let $G$ be a $k$-regular graph. Without loss of generality, assume that $G$ is connected. 
Otherwise, we consider each connected component of $G$. Let $M$ be a maximum matching of
$G$. 
If $|M|\ge (|V(G)|-1)/2$, then $G$ has at most one $M$-unsaturated
vertex. Theorem~\ref{thm:simple} and Theorem~\ref{thm:4-regular} 
hold automatically. So in the following, assume 
$|M|<(|V(G)|-1)/2$. So $k\ge 3$.

By Gallai-Edmonds Structure Theorem, $V(G)$ can be partitioned into
three parts $C$, $A$ and $D$ such that every maximum matching of $G$
matches all vertices of $A$ with vertices in distinct components of $G[D]$.
Let $c(D)$ be the number of components
of $G[D]$. Then $|M|=|C|/2+(|D|-c(D))/2+|A|$ by Gallai-Edmonds Structure
Theorem.
So \[|C|/2+(|D|-c(D))/2+|A|=|M|<(|V(G)|-1)/2=(|C|+|A|+|D|-1)/2.\]
It follows that $c(D)\ge 2+|A|$. 

Let $Q_1, Q_2, ...,Q_{c(D)}$ be all components of $G[D]$. Let $[Q_i,A]$ (resp. $[D,A]$)
be the set of all edges joining a vertex of $Q_i$ (resp. $D$) and a vertex of $A$. 
Note that \[\sum_{i=1}^{c(D)}|[Q_i,A]|=|[D,A]|\le k|A|\]
because $G$ is $k$-regular. Let $G/Q_i$ be the graph 
obtained by contracting $Q_i$ and deleting all loops. Note that $Q_i$ is 
factor-critical and hence has odd number of vertices, and 
$G/Q_i$ has even number of vertices of odd degree. So the degree of
the new vertex of $G/Q_i$ corresponding to $Q_i$ has the same parity as $k$. 
It follows that \[|[Q_i,A]|\equiv k \pmod 2.\] 
In the following, we always assume that $|[Q_i, A]|\ge |[Q_j, A]|$ for $i\le j$. Then 
there exists an integer $t<|A|$ such that $|[Q_i,A]|< k$ for any $i\ge t$. A vertex $v$
of $Q_i $ is {\bf good} if all neighbors of $v$ are contained in $Q_i$.

\medskip

\noindent{\bf Proof of Theorem \ref{thm:simple}.}  Since $G$ is a simple graph, for each
$Q_i$, we have 
\[\frac{k|V(Q_i)|-|[Q_i,A]|}{2} \le {|V(Q_i)|\choose 2}.\]
Note that $|[Q_i,A]|< k$ if $i\ge t$. It follows that $|V(Q_i)|>k$ for $i\ge t$. So at least one vertex of $Q_i$ with $i\ge t$ has
no neighbors in $A$. Hence every component of $Q_i$ with
$i\ge t$ contains a good vertex.  Choose a good vertex $v_i$ from each $Q_i$ with $i\ge t$ and let $X$ be the set of all chosen good vertices 
$v_i$. Then any two vertices of $X$ do not share a neighbor because $Q_i\cap Q_j=\emptyset$ if $i\ne j$.

%Let $X=\{v_i| v_i\in Q_i
%\mbox{ for } i\ge t \mbox{ and } N(v_i)\subseteq V(Q_i)\}$. 

Contract all components $Q_i$ into a vertex $q_i$, and let  $B=\{q_i|i=1,2,..., c(D)\}$. Let $G(A,B)$ be the bipartite graph with 
bipartition $A$ and $B$, and all edges in $[D,A]$ of $G$.  
Let $W:=\{q_i| q_i\in B\mbox{ and }d_H(q_i)\ge k\}$, the set of vertices corresponding to such 
$Q_i$ with $|[Q_i,A]|\ge k$ (i.e, $i< t$). By Gallai-Edmonds Structure Theorem, every maximum matching of $G(A,B)$
covers all vertices of $A$.  By Lemma~\ref{lemma:case4}, 
$G(A,B)$ has a maximum matching $M$ covering all vertices of $W$ and all vertices of $A$.
In the graph $G$, $M$ is a matching which covers all vertices of $A$, and a vertex from every $Q_i$ with $i<t$, and a vertex 
from some $Q_j$ with $j\ge t$. 
For each $Q_i$, let $M_i$ be a near-perfect matching covering all vertices except the vertex covered by $M$ or the good vertex $v_i$ if the component $Q_i$ has no vertex covered by $M$.

By Gallai-Edmonds Structure Theorem, $G[C]$ has a perfect matching $M_C$. Let $M'$ be the union of $M$, $M_C$ and all $M_i$'s. Then 
$M'$
is a maximum matching of $G$. So all $M'$-unsaturated vertices belong to $X$. So any two $M'$-unsaturated vertices do not share a neighbor. This completes the proof.
\qed\medskip

Now we are going to prove Theorem~\ref{thm:4-regular}.\medskip

\noindent{\bf Proof of Theorem \ref{thm:4-regular}.} Let $G$ be a $k$-regular graph with multi-edges and 
$k\le 4$.  
Note that, $|[Q_i,A]|\ge k$ if $i< t$ and $|[Q_i,A]|<k$ if $i\ge t$.  
Since $|[Q_i,A]|\equiv k \pmod 2$, it follows that $|[Q_{i}, A]| =k-2$ for $i\ge t$. 
Hence $Q_i$ for $i\ge t$ 
is not a singleton. Further,
$Q_i$ with $i\ge t$ has at least three vertices
because $Q_i$ is factor-critical.  So every component
$Q_i$ for $i\ge t$ has a good vertex $v_i$. 
%Let $X=\{v_i| v_i\in Q_i \mbox{ with } i\ge t \mbox{ and } N(v_i)\subseteq V(Q_i)\}$.

A similar argument as above shows that $G$ has a maximum matching $M'$ which covers all vertices of
$G$ except some good vertices from different components $Q_i$'s of $D$. Since any two good vertices from 
different $Q_i$ and $Q_j$ do not share a neighbor, the theorem follows. \qed


\begin{thebibliography}{}
\parskip=-0.1cm
\bibitem{E65} J. Edmonds, Paths, trees and flowers, Canad. J. Math 17 (1965) 449--467.

\bibitem{G64} T. Gallai, Maximale systeme unabh\"{a}ngiger kanten, Magyar Tud. 
Akad. Mat. Kutat\'o Int. K\"{o}zl. 9 (1964) 401--413.

\bibitem{LP} L. Lov\'asz and M.D. Plummer, Matching Theory, North Holland, Amsterdan, 1986.

\bibitem{MPV10} V.V. Mkrtchyan, S.S. Petrosyan and G.N. Vardanyan, On disjoint 
matchings in cubic
graphs, Discrete Math. 310 (2010) 1588--1613.

\bibitem{MPV13} V.V. Mkrtchyan, S.S. Petrosyan and G.N. Vardanyan, Corrigendum to 
``On disjoint matchings in cubic graphs" [Discrete Math. 310 (2010) 1588--1613], 
Discrete Math. 313 (2013) 2381.

\bibitem{P} P.A. Petrosyan, On maximum matchings in almost regular graphs, Discrete Math. 318 (2014) 58--61.

\bibitem{CP} C. Picouleau, A note on a conjecture on maximum matching in almost regular graphs,
Discrete Math. 310 (2010) 3646--3647. 
\end{thebibliography}
\end{document}